\newtheorem{theorem}{Theorem}[section]
\newtheorem{lemma}[theorem]{Lemma}
\def\g{\gamma}
\def\la{\lambda}
\def\th{\theta}
\def\wt{\widetilde}
\def\C{\mbox{$\mathbb C$}}
\def\D{\mbox{$\mathbb D$}}
\def\MMM{{\mathcal M}}
\def\HH{{\mathbf H}}
\def\MM{{\mathbf M}}
\def\BB{{\mathbf B}}
\def\cbar{\overline{\C}}
\def\smm{{\backslash}}
\def\ds{\displaystyle}
\begin{document}
\title{On the boundary of the central quadratic \\ hyperbolic component
\footnote{2010 Mathematics Subject Classification: 37F10, 37F20}}
\author{Guizhen Cui\thanks{The first author is supported by National Key R\&D Program of China No. 2021YFA1003203, and the NSFC Grant Nos. 12131016 and 12071303.}
\and Wenjuan Peng\thanks{The second author is supported by the NSFC Grant Nos. 12122117, 12271115 and 12288201.}}
\date{\today}
\maketitle
\begin{abstract}
We give a concrete description for the boundary of the central quadratic hyperbolic component. The connectedness of the Julia sets of the boundary maps are also considered.
\end{abstract}

\section{Introduction}
Let $\MMM_d$ denote the complex orbifold of holomorphic conjugate classes of rational maps of degree $d\ge 2$. A rational map is {\bf hyperbolic} if the orbit of every critical point converges to attracting (or super-attracting) periodic cycles under iteration. Hyperbolic maps form an open subset of $\MMM_d$ and conjecturally they are dense in $\MMM_d$ \cite{MSS}. A connected component of this open subset is called a {\bf hyperbolic component}. Any two maps in the same hyperbolic component are quasiconformally conjugate in a neighborhood of their Julia sets \cite{MSS}.

The hyperbolic components of quadratic rational maps have been intensively studied and fruitful results have been obtained. For example, Rees \cite{R} gave a classification of the hyperbolic components (see also \cite{Mil1}), and proved the boundedness of certain one real dimension loci in hyperbolic components. Epstein \cite{E} provided a boundedness result for a hyperbolic component of quadratic rational maps possessing two distinct attracting cycles. The closure of the set of quadratic rational maps having a degenerate parabolic fixed point was investigated by Buff-\'{E}calle-Epstein in \cite{BEE}.

In this work, we consider the boundary of the hyperbolic component $\HH$ in $\MMM_2$ containing $z\mapsto z^2$, which is called the {\bf central} quadratic hyperbolic component. Every rational map in $\HH$ has two fixed attracting (or super-attracting) points and its Julia set is a quasicircle.

Each quadratic rational map has exactly three fixed points (counting the multiplicity). Let $\la_1,\la_2,\la_3$ be the eigenvalues at fixed points. These three eigenvalues determine the quadratic rational map up to holomorphic conjugacy (\cite[Lemma 3.1]{Mil1}). If the three fixed points are distinct, then by the Rational Fixed Point Theorem (\cite[Theorem 12.4]{Mil2}), we have
\begin{eqnarray}
\frac{1}{1-\la_1}+\frac{1}{1-\la_2}+\frac{1}{1-\la_3}=1.
\end{eqnarray}

If a quadratic rational map has at least two distinct fixed points, conjugating the map by a M\"{o}bius transformation if necessary, we may assume that $0$ and $\infty$ are the two fixed points and the eigenvalues at them are $\la_1$ and $\la_2$ respectively. Thus the holomorphic conjugacy class of the map can be represented by
$$
f_{\la_1,\la_2}(z)=\frac{\la_1z+z^2}{\la_2z+1}
$$
with $\lambda_1\lambda_2\neq 1$. Note that $f_{\la_1,\la_2}$ is holomorphically conjugated to $f_{\la_2,\la_1}$. Therefore the central quadratic hyperbolic component can be expressed by
$$
\HH=\{[f_{\la_1,\la_2}]:\, (\la_1,\la_2)\in\D\times\D\},
$$
where $\D$ denotes the unit disk.

Recall that
\begin{eqnarray*}
\mathrm{Per}_1(1)&=&\{[f]\in\MMM_2:\, f\ \text{has a fixed point with eigenvalue 1}\}\text{ and} \\
\mathbf{M}_1&=&\{[f]\in\mathrm{Per}_1(1):\, J(f)\ \text{is connected}\},
\end{eqnarray*}
where $J(f)$ denotes the Julia set of $f$. One may refer to \cite{BE,Mil1,PR} for more properties about $\mathbf{M}_1$, see Figure 1 for a picture of $\MM_1$ parameterized by the eigenvalue of the other fixed point.

\begin{figure}[http]
\centering
\includegraphics[width=12cm]{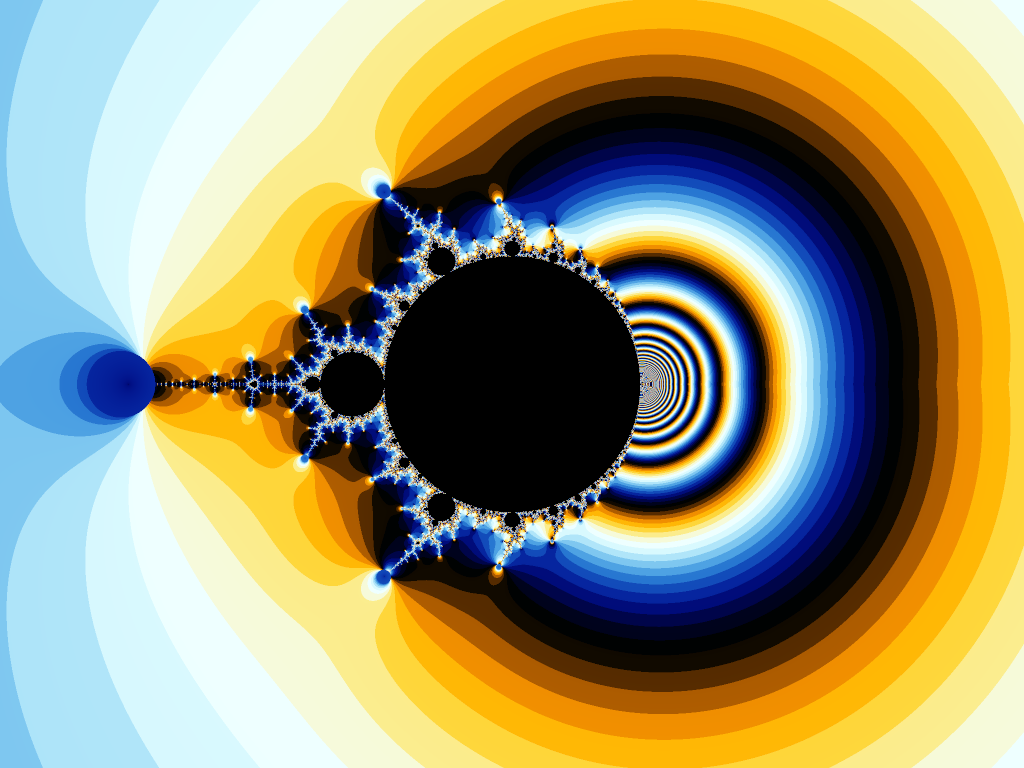}
\caption{A picture of $\MM_1$}
\end{figure}

Denote
\begin{eqnarray*}
\BB_0&=&\{[f_{\la_1,\la_2}]:\, (\la_1,\la_2)\in\overline{\D}\times\partial\D,\text{ but } \la_1\ne1, \la_2\ne1 \text{ and }  \la_1\la_2\ne1\},\\
\BB_1&=&\{[f_{1,\la}]:\, \la\in \overline{\D}\smm\{1\}\}, \\
\BB_2&=&\{[f_{1,\la}]:\, \mathrm{Re}\,\la>1\}.
\end{eqnarray*}

From the definitions, we easily make the following deductions.
\begin{itemize}
\item[(a)] Obviously, $\BB_0$, $\BB_1$ and $\BB_2$ are pairwise disjoint.

For any $[g]\in\BB_0\cup \BB_1$, either $g$ has three distinct fixed points or $g$ has no repelling fixed points. Note that $$
\overline{\BB}_2=\{[f_{1,\la}]:\, \mathrm{Re}\,\la\ge 1\text{ but }\la\neq 1\}\cup\{[R]\},
$$
where $R(z)=z+1/z$. So for any $[g]\in\overline{\BB}_2$, either $g$ has exactly two fixed points and one of them is repelling, or $g$ has a unique fixed point. Thus $(\BB_0\cup \BB_1)\cap\overline{\BB}_2=\emptyset$.

\item[(b)] It is clear that $\BB_0\cap \mathrm{Per}_1(1)=\emptyset$ and $\BB_1\cup \overline{\BB}_2\subset \mathrm{Per}_1(1)$.

\item[(c)] By \cite[Lemma 8.2]{Mil1}, for any $[f]\in\MMM_2$, either $J(f)$ is connected or $J(f)$ is a Cantor set. In the latter case, both critical orbits of $f$ converge to an attracting or a parabolic fixed point. If $[g]\in\BB_0\cup\BB_1$, then $g$ has two distinct non-repelling fixed points. Thus $J(g)$ is connected. Actually,  $\BB_1\cup \{[R]\}$ is the closure of the central component of the interior of $\MM_1$.
\end{itemize}

 Our main result is the following.

\begin{theorem}\label{main}
The following statements hold:
\begin{itemize}
\item[(1)] $\partial\HH=\BB_0\cup\BB_1\cup\overline{\BB}_2$;
\item[(2)] $\BB_2\cap \mathbf{M}_1=\emptyset$;
\item[(3)] $\partial\HH\cap(\mathrm{Per}_1(1)\smm\mathbf{M}_1)\ne\mathrm{Per}_1(1)\smm\mathbf{M}_1$.
\end{itemize}
\end{theorem}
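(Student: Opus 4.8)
The plan is to exhibit a single map in $\mathrm{Per}_1(1)\smm\mathbf{M}_1$ that does not lie on $\partial\HH$; since statement (3) asserts only a proper inclusion, one such map suffices. First I would reduce to a concrete parameter condition: by statement (1) together with item (b) of the deductions above, $\partial\HH\cap\mathrm{Per}_1(1)=(\BB_0\cup\BB_1\cup\overline{\BB}_2)\cap\mathrm{Per}_1(1)=\BB_1\cup\overline{\BB}_2$. Working in the slice $\la_1=1$, where $\mathrm{Per}_1(1)=\{[f_{1,\la}]:\la\in\C\smm\{1\}\}$, the assignment $\la\mapsto[f_{1,\la}]$ is injective and never equals $[R]$ (which has a single fixed point, whereas $f_{1,\la}$ fixes both $0$ and $\infty$); hence $[f_{1,\la}]\notin\BB_1$ exactly when $|\la|>1$, and $[f_{1,\la}]\notin\overline{\BB}_2$ exactly when $\mathrm{Re}\,\la<1$. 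Since $[f_{1,\la}]\in\mathrm{Per}_1(1)$ always, it suffices to find one $\la$ with $|\la|>1$, $\mathrm{Re}\,\la<1$, and $J(f_{1,\la})$ a Cantor set.

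Such $\la$ exist because the connectedness locus $\mathbf{M}_1$ is bounded in this parameterization (classical; compare \cite{BE,Mil1,PR} and Figure 1). Therefore every sufficiently large negative real $\la=-t$ lies outside $\mathbf{M}_1$, while $|\la|=t>1$ and $\mathrm{Re}\,\la=-t<1$; thus $[f_{1,-t}]\in\mathrm{Per}_1(1)\smm\mathbf{M}_1$ but $[f_{1,-t}]\notin\BB_1\cup\overline{\BB}_2=\partial\HH\cap\mathrm{Per}_1(1)$, so $[f_{1,-t}]\notin\partial\HH$, establishing statement (3). For a self-contained version one can argue directly with $\la=-t$: the critical points, the roots $c_\pm=(-1\pm\sqrt{1-\la})/\la$ of $\la z^2+2z+1=0$, both tend to $0$ as $t\to\infty$, and using $tc_\pm^2=2c_\pm+1$ one finds $f_{1,-t}(c_\pm)=-1/t+O(t^{-3/2})$; since $f_{1,-t}(z)=z+(t+1)z^2+O(z^3)$ near $0$, the negative real direction enters the attracting petal at the parabolic fixed point $0$, so for $t$ large both critical orbits converge to $0$, and a Jordan-domain/covering argument then identifies $J(f_{1,-t})$ as a Cantor set.

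The main obstacle is this last dynamical point in the self-contained route: ``both critical orbits converge to a parabolic fixed point'' does not by itself force $J$ to be a Cantor set (for instance, $z^2+1/4$ has both critical orbits attracted to a parabolic point yet a connected Julia set), so one must actually exhibit, for large $t$, a Jordan neighborhood of the petal that is compactly mapped inside itself and whose full preimage has one component around each critical point. Invoking the boundedness of $\mathbf{M}_1$ avoids this at the price of relying on an external, but entirely standard, fact; which route to take is really just a matter of how much of the $\la$-parameter geometry the earlier sections already make available.
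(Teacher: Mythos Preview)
Your proposal treats only part (3), taking (1) as established; that matches the paper's organization, where (1) and (2) are obtained simply by combining Lemma~\ref{12}, Theorem~\ref{convergence} and Lemma~\ref{dynamics}. For (3) your strategy is exactly the paper's: from (1) and item (b) one has $\partial\HH\cap\mathrm{Per}_1(1)=\BB_1\cup\overline{\BB}_2$, so it suffices to exhibit some $\la$ with $|\la|>1$, $\mathrm{Re}\,\la<1$, and $[f_{1,\la}]\notin\MM_1$, which follows once $\MM_1$ is known to be bounded.

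The difference lies in how the boundedness is obtained. You either cite it or sketch a direct petal argument for $f_{1,-t}$ and correctly flag the remaining work (building the Jordan domain that certifies the Cantor structure). The paper instead proves its own elementary bound, Lemma~\ref{bound}, and the point worth noting is that the choice of normal form makes your acknowledged obstacle disappear: writing the map as $g_B(z)=z+B+1/z$ (so $\la=1-B^2$), the parabolic fixed point is at $\infty$ and the region $\{|z|>1,\ \mathrm{Re}(z/B)>0\}$ is forward invariant once $|B|>3$, with $\mathrm{Re}(g_B^n(z)/B)\to\infty$. Both critical values $B\pm 2$ lie in this region, so both critical orbits tend to $\infty$; one then invokes \cite[Lemma 8.1]{Mil1} directly, with no need to hand-build a petal neighborhood. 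This gives the explicit inclusion $\MM_1\subset\{|\la-1|\le 9\}$, and any $\la$ with $|\la-1|>9$ and $\mathrm{Re}\,\la<1$ (for instance your $\la=-t$ with $t>10$) finishes (3). So your plan is correct, but the paper's coordinate choice $g_B$ is what turns your ``self-contained route'' into a two-line computation.
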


\noindent{\bf Remark.} 1. We highly appreciate that the referee makes us aware of Buff-Epstein's example which illustrates that
$$
\MM_1 \subset \{\la\in\C:\, |\la+1|\le 2\},
$$
where $\MM_1$ is parameterized by the eigenvalue of the other fixed point. Refer to \cite[pp. 272: Example]{BE}. Theorem \ref{main} (2) could be obtained directly by their illustration, and moreover, one could conclude that
$$
\overline{\BB}_2\cap \MM_1=\{[R]\}
$$
from their example.

In Section 3, we exploit twist surgery to prove $$
\BB_2\subset\partial\HH\cap(\mathrm{Per}_1(1)\smm\mathbf{M}_1),
$$
which is the main part of this work.

To prove Theorem \ref{main} (3), we discuss the boundedness of $\MM_1$ in Section 4 and give a bound on $\MM_1$ in Lemma \ref{bound}. The bound given by Buff-Epstein is stronger than Lemma \ref{bound}. We would like to point out that, compared to their result, Lemma \ref{bound} is obtained by quite simple and elementary computations. For self-containedness, we include Lemma \ref{bound} in this article.

Obviously, $\partial\HH\cap \mathrm{Per}_1(1)=\BB_1\cup\overline{\BB}_2$ by Theorem \ref{main} (1).

\vspace{2mm}

2. One typical way to produce boundary maps of hyperbolic components is pinching. It is proved in \cite{CT} that under certain conditions, the pinching path $\{f_t\}$ starting from a geometrically finite rational map $f$ converges uniformly to a geometrically finite rational map $g$, and the quasiconformal conjugacy from $f$ to $f_t$ converges uniformly to a semi-conjugacy from $f$ to $g$. Moreover, $M[g,J]\subset\partial M[f,J]$, where $M[f,J]\subset\MMM_d$ is defined as $[h]\in M[f,J]$ if $h$ is quasiconformally conjugate to $f$ in a neighborhood of the Julia sets.

It turns out that any geometrically finite rational map $g$ with $[g]\in\BB_0\cup\BB_1$ is the limit of a pinching path starting from a rational map in $\HH$. By Theorem \ref{main} (2), if $[g]\in\BB_2$, then $J(g)$ is disconnected. Thus it can not be obtained by pinching from $\HH$.

It is obvious that $\HH=M[f,J]$ for any $[f]\in\HH$; and $\mathrm{Per}_1(1)\smm\mathbf{M}_1=M[g,J]$ for any $[g]\in\mathrm{Per}_1(1)\smm\mathbf{M}_1$ (\cite{MSS}). By Theorem \ref{main} (3), for $[g]\in\BB_2$, $[g]\in\partial M[f,J]$ but $M[g,J]$ is not contained in $\partial M[f,J]$.

\section{A direct description of $\partial\HH$}
In this section, we give a direct description of the boundary $\partial\HH$ by a basic discussion.

\begin{lemma}\label{repelling}
For any $[f_{\lambda_1,\lambda_2}]\in\HH$, let $\lambda_3$ be the eigenvalue of $f_{\lambda_1,\lambda_2}$ at the unique repelling fixed point. Then $\mathrm{Re}\,\la_3>1$.
\end{lemma}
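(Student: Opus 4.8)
The plan is to exploit the fixed point relation $(1)$ together with the defining constraint $(\lambda_1,\lambda_2)\in\D\times\D$. For $[f_{\lambda_1,\lambda_2}]\in\HH$ the map has two attracting (or superattracting) fixed points at $0$ and $\infty$ with eigenvalues $\lambda_1,\lambda_2$ satisfying $|\lambda_1|<1$, $|\lambda_2|<1$, and a third fixed point which, by item (c) of the introductory discussion and the fact that $J(f)$ is a quasicircle, must be repelling; call its eigenvalue $\lambda_3$. Applying the Rational Fixed Point Theorem in the form $(1)$,
\begin{equation*}
\frac{1}{1-\lambda_3}=1-\frac{1}{1-\lambda_1}-\frac{1}{1-\lambda_2},
\end{equation*}
so it suffices to show that the right-hand side, call it $w$, has positive real part; for then $1-\lambda_3=1/w$ also has positive real part, whence $\mathrm{Re}\,\lambda_3=1-\mathrm{Re}(1/w)<1$ — wait, that gives the wrong inequality, so in fact I must show $\mathrm{Re}(1/(1-\lambda_3))<1/2$, equivalently that $1-\lambda_3$ lies outside the disk $\{|\zeta-1|<1\}$, which is what translates to $\mathrm{Re}\,\lambda_3>1$ after a short computation (the condition $\mathrm{Re}(1/\zeta)<1/2$ is equivalent to $|\zeta-1|>1$, i.e. $|\lambda_3|>|1-\lambda_3\cdot 0|$... concretely $\mathrm{Re}\,\lambda_3>1 \iff |1-\lambda_3|>1 \iff \mathrm{Re}\frac{1}{1-\lambda_3}<\frac12$).

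The first key step is therefore the elementary observation that the Möbius map $\zeta\mapsto 1/(1-\zeta)$ sends the open unit disk $\D$ onto the half-plane $\{\mathrm{Re}\,\zeta>1/2\}$: indeed $1-\zeta$ ranges over the disk of radius $1$ centered at $1$, and $w\mapsto 1/w$ maps that disk to $\{\mathrm{Re}\,w>1/2\}$. Consequently $\frac{1}{1-\lambda_1}$ and $\frac{1}{1-\lambda_2}$ each have real part strictly greater than $1/2$, so their sum has real part strictly greater than $1$, and hence $w=1-\frac{1}{1-\lambda_1}-\frac{1}{1-\lambda_2}$ has $\mathrm{Re}\,w<0$. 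The second step is to run this through $\zeta\mapsto 1/(1-\zeta)$ once more — or rather to note directly that $\frac{1}{1-\lambda_3}=w$ with $\mathrm{Re}\,w<0$ forces $1-\lambda_3=1/w$ to have $\mathrm{Re}(1/w)<0$ as well (since $w$ and $1/w$ have real parts of the same sign), so $\mathrm{Re}(1-\lambda_3)<0$, i.e. $\mathrm{Re}\,\lambda_3>1$, as claimed.

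One should address the degenerate cases separately but briefly. If the three fixed points are not all distinct then $(1)$ does not apply verbatim; however, inside $\HH$ the two fixed points $0,\infty$ are genuinely attracting/superattracting and distinct from each other, and a coincidence of the third fixed point with one of them would force a multiple fixed point with eigenvalue $1$ (a parabolic point), which is incompatible with membership in the open set $\HH$ — so in fact all three are distinct whenever $[f_{\lambda_1,\lambda_2}]\in\HH$, and $(1)$ applies. I would also note that the arithmetic above never divides by zero: $\lambda_1,\lambda_2\in\D$ gives $1-\lambda_i\neq 0$, and $\mathrm{Re}\,w<0$ in particular gives $w\neq 0$ so $1/w$ is defined.

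I do not anticipate a genuine obstacle here; the only thing requiring a little care is bookkeeping the two applications of the Cayley-type transformation $\zeta\mapsto 1/(1-\zeta)$ and getting the direction of the final inequality right, together with the remark that $\HH$ being open rules out the parabolic degenerate configuration so that $(1)$ is legitimately available.
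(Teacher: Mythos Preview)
Your argument is correct and is essentially the same as the paper's: both use that $\zeta\mapsto 1/(1-\zeta)$ carries $\D$ onto $\{\mathrm{Re}>1/2\}$, add the two contributions from $\lambda_1,\lambda_2$ via equation~(1) to get $\mathrm{Re}\,\frac{1}{1-\lambda_3}<0$, and then observe that this forces $\mathrm{Re}\,\lambda_3>1$. Your additional remark that the three fixed points are genuinely distinct inside $\HH$ (so that (1) applies) is a welcome clarification the paper leaves implicit; the self-correcting detour in the middle of your write-up should simply be excised in a final version.
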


\begin{proof}
Let $\g(z)=1/(1-z)$. Then $\g$ maps $\D$ onto $\{w\in\C:\,\mathrm{Re}\,w>1/2\}$. Combining with the equation (1), we have
$$
\mathrm{Re}\left(1-\frac{1}{1-\la_3}\right)=\mathrm{Re}\left(\frac{1}{1-\la_1}+\frac{1}{1-\la_2}\right)>1.
$$
So $\mathrm{Re}~(1/(1-\la_3))<0$ and hence $\text{Re}\,\la_3>1$.
\end{proof}

\begin{lemma}\label{eigenvalue}
Let $f_n$ be quadratic rational maps with eigenvalues $\la_1(f_n),\la_2(f_n),\la_3(f_n)$ at the fixed points. Then $\{[f_n]\}$ converges to $[g]$ in $\MMM_2$ if and only if by a suitable ordering, $((\la_1(f_n), \la_2(f_n), \la_3(f_n))$ converges to $(\la_1, \la_2, \la_3)$ in $\C^3$, where $\la_1, \la_2, \la_3$ are the eigenvalues of $g$ at the fixed points.
\end{lemma}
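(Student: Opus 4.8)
The key fact I would invoke is the one already recalled in the excerpt from \cite[Lemma 3.1]{Mil1}: the holomorphic conjugacy class of a quadratic rational map (with distinct fixed points, or more generally via the appropriate symmetric functions) is determined by, and determines, the \emph{unordered} triple of fixed-point eigenvalues $\{\la_1,\la_2,\la_3\}$. Concretely, passing to the elementary symmetric functions $\sigma_1=\sum\la_i$, $\sigma_2=\sum_{i<j}\la_i\la_j$, $\sigma_3=\la_1\la_2\la_3$ gives a well-defined injective map $\MMM_2\to\C^3$; moreover, since the eigenvalues depend holomorphically (hence continuously) on the map and conversely one recovers a normal form such as $f_{\la_1,\la_2}$ algebraically from the $\sigma_i$'s subject to the constraint (1), this map is in fact a homeomorphism onto its image (a Zariski-open subset of $\C^3$, in fact a hypersurface once one accounts for (1)). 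I would state this identification once at the start and then the lemma becomes essentially a statement about continuity of symmetric-function coordinates.

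The plan is as follows. First, for the ``only if'' direction: assume $[f_n]\to[g]$ in $\MMM_2$. By the homeomorphism above, the symmetric functions $\sigma_k(f_n)$ converge to $\sigma_k(g)$ for $k=1,2,3$. Then $\la_1(f_n),\la_2(f_n),\la_3(f_n)$ are the three roots of the cubic $t^3-\sigma_1(f_n)t^2+\sigma_2(f_n)t-\sigma_3(f_n)=0$, and continuity of roots of a monic polynomial in terms of its coefficients (the roots of the limit polynomial $t^3-\sigma_1(g)t^2+\sigma_2(g)t-\sigma_3(g)$ are exactly $\la_1,\la_2,\la_3$) gives that, after passing to a subsequence and reordering, $(\la_1(f_n),\la_2(f_n),\la_3(f_n))\to(\la_1,\la_2,\la_3)$. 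To get convergence along the \emph{whole} sequence with a \emph{fixed} ordering, I would argue by the standard subsequence trick: every subsequence has a further subsequence converging (after some reordering) to a permutation of $(\la_1,\la_2,\la_3)$, and since there are only finitely many permutations one can pass to a tail on which a single permutation works; relabelling $g$'s eigenvalues by that permutation once and for all yields the claim. For the ``if'' direction: if $(\la_1(f_n),\la_2(f_n),\la_3(f_n))\to(\la_1,\la_2,\la_3)$ for some ordering, then trivially $\sigma_k(f_n)\to\sigma_k(g)$ for each $k$ (the $\sigma_k$ are continuous — indeed polynomial — in the eigenvalues), and applying the homeomorphism in the other direction gives $[f_n]\to[g]$ in $\MMM_2$.

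The only genuine subtlety — and the step I would treat most carefully — is the passage between ordered tuples and unordered data, i.e.\ making sure the reordering can be chosen uniformly in $n$; this is exactly what the finiteness-of-$S_3$ subsequence argument handles. A secondary point worth a sentence is the degenerate case where $g$ (or some $f_n$) has a fixed point of multiplicity $\ge 2$, so that the ``eigenvalues'' are counted with multiplicity and the normal form $f_{\la_1,\la_2}$ may need slight care (e.g.\ when $\la_1\la_2=1$ two fixed points collide); but the symmetric-function formulation sidesteps this entirely, since root-continuity of the defining cubic holds with multiplicity regardless, and the Milnor identification of $\MMM_2$ with its image in $\C^3$ is valid throughout. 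I expect no real obstacle beyond bookkeeping: the lemma is, in essence, the statement that ``$\MMM_2\cong$ (a subvariety of) $\C^3$ via symmetric functions of eigenvalues'' is a homeomorphism, combined with continuity of polynomial roots.
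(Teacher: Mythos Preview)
Your proposal is correct and takes a genuinely different route from the paper's own proof. You invoke Milnor's global identification $\MMM_2\cong\C^2$ via the elementary symmetric functions $(\sigma_1,\sigma_2)$ of the fixed-point eigenvalues (with $\sigma_3=\sigma_1-2$ forced by the holomorphic index formula), and then reduce the lemma to the continuity of roots of a monic cubic in its coefficients. The paper instead works entirely by hand with the normal form $z\mapsto \la_1 z+B+1/z$: it observes that $[g]$ always admits such a representative with $\la_1\neq 0$, that $[f_n]\to[g]$ forces $B_n^2\to B^2$, and then reads off convergence of the remaining two eigenvalues from the explicit formula $1-B_n^2/2\pm B_n\sqrt{B_n^2/4-(\la_1(f_n)-1)}$; for the converse it uses $\la_2(f_n)+\la_3(f_n)=2-B_n^2$ to recover $B_n^2$. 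Your approach is cleaner and more conceptual, and it automatically handles the multiple-fixed-point case without special pleading; the paper's approach is self-contained (it does not import the full Milnor isomorphism) and makes the dependence on the normal-form parameter $B$ explicit, which is in the spirit of the concrete computations used elsewhere in the article.

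One small point of care: your sentence ``since there are only finitely many permutations one can pass to a tail on which a single permutation works'' is not quite the right justification for lifting convergence from $\C^3/S_3$ to $\C^3$. What actually works is simply to choose, for each $n$, a permutation minimizing the distance from the ordered triple to $(\la_1,\la_2,\la_3)$; convergence of the unordered triples forces this minimal distance to tend to zero. This is a cosmetic fix, not a gap.
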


\begin{proof}
Assume that $[f_n]$ converges to $[g]$ in $\MMM_2$. Since $g$ always has a fixed point with non-zero eigenvalue, we may represent $[g]$ by
$$
g(z)=\la_1 z+B+1/z,
$$
with $\la_1\ne 0$.
Note that $g$ is holomorphically conjugate to $z\mapsto\la_1 z-B+1/z$.

As $n>0$ is large enough, $\la_1(f_n)\neq 0$. So $[f_n]$ can also be represented by
$$
f_n(z)=\la_1(f_n) z+B_n+1/z.
$$
Consequently, $\{B_n^2\}$ converges to $B^2$ as $n\to\infty$.

The eigenvalues of the other two fixed points of $f_n$ are
\begin{equation}
1-\frac{B_n^2}{2}\pm B_n\sqrt{\frac{B_n^2}{4}-[\la_1(f_n)-1]}.
\end{equation}
Thus $(\la_2(f_n),\la_3(f_n))$ converges to $(\la_2,\la_3)$ since $\{B_n^2\}$ converges to $B^2$.

Conversely, assume $((\la_1(f_n), \la_2(f_n), \la_3(f_n))$ converges to $(\la_1, \la_2, \la_3)$ in $\C^3$. Then at least one of them, say $\la_1$ is non-zero. Thus as $n>0$ is large enough, $\la_1(f_n)\neq 0$. As above, $[f_n]$ can also be represented by $f_n(z)=\la_1(f_n) z+B_n+1/z$. By equation (2), we have
$$
\la_2(f_n)+\la_3(f_n)=2-B_n^2.
$$
From the condition $((\la_1(f_n), \la_2(f_n), \la_3(f_n))$ converges to $(\la_1, \la_2, \la_3)$ in $\C^3$, we obtain $\{B_n^2\}$ converges to $B^2$. This yields that $\{[f_n]\}$ is convergent in $\MMM_2$.
\end{proof}

The reader may also refer to \cite[Section 3]{Mil1} or \cite[Lemma 3]{E} for the above lemma. The statement here is slightly different from them.

\begin{lemma}\label{12}
The following statements hold:
\begin{itemize}
\item[(i)] $\partial\HH\subset\BB_0\cup\BB_1\cup\overline{\BB}_2$;
\item[(ii)] $\BB_0\cup\BB_1\subset\partial\HH$.
\end{itemize}
\end{lemma}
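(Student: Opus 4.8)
The plan is to prove the two inclusions separately, the engine in both being the closed-form expression for the third eigenvalue read off from (1),
$$
\la_3(\la_1,\la_2)=1-\frac{(1-\la_1)(1-\la_2)}{\la_1\la_2-1},
$$
which is holomorphic wherever $\la_1\la_2\ne1$ and satisfies $\la_3(1,\la)=\la_3(\la,1)=1$, together with the fact that the eigenvalue triple determines the conjugacy class (\cite[Lemma 3.1]{Mil1}) and Lemmas \ref{repelling} and \ref{eigenvalue}.

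For (i), I would take $[g]\in\partial\HH$ and choose $(\la_1^n,\la_2^n)\in\D\times\D$ with $[f_{\la_1^n,\la_2^n}]\to[g]$. Setting $\la_3^n=\la_3(\la_1^n,\la_2^n)$, Lemma \ref{repelling} gives $\mathrm{Re}\,\la_3^n>1$, and by Lemma \ref{eigenvalue}, after passing to a subsequence I may assume $(\la_1^n,\la_2^n,\la_3^n)\to(\mu_1,\mu_2,\mu_3)$, the eigenvalue triple of $g$, with $\mu_1,\mu_2\in\overline{\D}$ and $\mathrm{Re}\,\mu_3\ge1$. Since $\HH$ is open, $[g]\notin\HH$, so $\mu_1$ and $\mu_2$ cannot both lie in $\D$ (otherwise $\mu_1\mu_2\ne1$ and $[g]=[f_{\mu_1,\mu_2}]\in\HH$); say $\mu_2\in\partial\D$. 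Then I would split into two regimes. If $\mu_1\mu_2\ne1$, continuity of $\la_3$ gives $\mu_3=\la_3(\mu_1,\mu_2)$, hence $[g]=[f_{\mu_1,\mu_2}]$: when $\mu_1,\mu_2\ne1$ this puts $[g]$ in $\BB_0$; when one of $\mu_1,\mu_2$ equals $1$, then $\mu_3=1$ by $\la_3(1,\cdot)\equiv1$, so $[g]=[f_{1,\nu}]$ with $\nu\in\overline{\D}\smm\{1\}$ the remaining eigenvalue and $[g]\in\BB_1$. If instead $\mu_1\mu_2=1$, then $|\mu_1|=|\mu_2|=1$, and since $\la_3^n\to\mu_3$ is finite the numerator $(1-\la_1^n)(1-\la_2^n)$ must tend to $0$, forcing $\mu_1=\mu_2=1$; then $1/(1-\la_i^n)\to\infty$ with real parts exceeding $1/2$, so $\mu_3=1$ as well, $g$ has eigenvalue triple $(1,1,1)$, and $[g]=[R]\in\overline{\BB}_2$. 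Exhausting the cases yields $\partial\HH\subset\BB_0\cup\BB_1\cup\overline{\BB}_2$.

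For (ii), I would take $[g]\in\BB_0\cup\BB_1$, written $[g]=[f_{a,b}]$ with $ab\ne1$, where $b\in\partial\D$ in the $\BB_0$ case and $a=1$ in the $\BB_1$ case. In either case $g$ has a fixed point with eigenvalue on $\partial\D$, hence an indifferent fixed point, so $g$ is not hyperbolic and $[g]\notin\HH$. To obtain $[g]\in\overline\HH$, I would approximate $(a,b)$ by points $(a_n,b_n)\in\D\times\D$, pushing every boundary eigenvalue (including $a=1$ in the $\BB_1$ case) into $\D$; then $[f_{a_n,b_n}]\in\HH$, and since $ab\ne1$ the function $\la_3$ is continuous at $(a,b)$, so the eigenvalue triple $(a_n,b_n,\la_3(a_n,b_n))$ converges to that of $g$. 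By Lemma \ref{eigenvalue}, $[f_{a_n,b_n}]\to[g]$, so $[g]\in\overline\HH\smm\HH=\partial\HH$; hence $\BB_0\cup\BB_1\subset\partial\HH$.

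I expect the main obstacle to be the case bookkeeping in (i): checking that every boundary point falls into one of the three prescribed sets, especially in the degenerate limits—where $\la_1^n\la_2^n\to1$, or an eigenvalue tends to $1$—in which the formula for $\la_3$ becomes indeterminate. The decisive input there is that the bound $\mathrm{Re}\,\la_3^n>1$ from Lemma \ref{repelling} survives as $\mathrm{Re}\,\mu_3\ge1$ in the limit; this is what excludes the problematic configurations and, in the fully degenerate case, isolates the triple-parabolic map $[R]$.
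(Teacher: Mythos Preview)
Your approach is the same as the paper's: both pass to convergent eigenvalue triples via Lemma~\ref{eigenvalue}, use Lemma~\ref{repelling} to control the third eigenvalue, and sort the limit into $\BB_0$, $\BB_1$ or $\overline{\BB}_2$ according to whether the two ``attracting'' eigenvalues both degenerate to $1$. Part (ii) is identical in spirit to the paper's one-line argument.

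There is, however, a genuine error in your handling of the case $\mu_1\mu_2=1$ in part (i). After correctly deducing $\mu_1=\mu_2=1$, you claim that because each $1/(1-\la_i^n)$ tends to $\infty$ with real part exceeding $1/2$, it follows that $\mu_3=1$. This inference is false: two sequences in the half-plane $\{\mathrm{Re}\,w>1/2\}$ with moduli tending to infinity can have bounded sum. Concretely, take $\la_1^n=in/(1+in)$ and $\la_2^n=-in/(1-in)$; then $\la_1^n,\la_2^n\in\D$, both tend to $1$, $\la_1^n\la_2^n=n^2/(1+n^2)\to1$, yet $1/(1-\la_1^n)+1/(1-\la_2^n)=(1+in)+(1-in)=2$, so $\la_3^n\equiv 2$ and $\mu_3=2\ne1$. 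Thus $[g]$ need not be $[R]$; indeed the entire set $\BB_2$ arises this way, which is the content of Theorem~\ref{convergence}.

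The fix is immediate and you already have the ingredient: from Lemma~\ref{repelling} you know $\mathrm{Re}\,\la_3^n>1$, hence $\mathrm{Re}\,\mu_3\ge1$, and the eigenvalue triple $(1,1,\mu_3)$ with $\mathrm{Re}\,\mu_3\ge1$ places $[g]$ in $\overline{\BB}_2$ regardless of whether $\mu_3=1$. Simply drop the erroneous claim $\mu_3=1$ and conclude $[g]\in\overline{\BB}_2$ directly; this is exactly what the paper does.
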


\begin{proof}
(i) Assume that $[g]\in\partial\HH$. Then there is a sequence $\{[f_n]\}$ in $\HH$ which converges to $[g]$. Let $\la_j(f_n)$ ($j=1,2,3$) be the eigenvalues of $f_n$ at the three fixed points with $|\lambda_3(f_n)|>1$. Then $(\la_1(f_n), \la_2(f_n), \la_3(f_n))$ converges to $(\la_1, \la_2, \la_3)$ in $\C^3$ by Lemma \ref{eigenvalue}.

If $\la_1\ne 1$ or $\la_2\ne 1$, then the two attracting fixed points of $\{f_n\}$ converge to two distinct fixed points of $g$. Moreover $\la_1\la_2\ne 1$. Thus $[g]=[f_{\lambda_1,\lambda_2}]\in\BB_0\cup\BB_1$. If $\la_1=\la_2=1$, then $\mathrm{Re}\,\la_3\ge 1$ by Lemma \ref{repelling}. Thus $[g]\in\overline{\BB}_2$. Now we have proved $\partial\HH\subset\BB_0\cup\BB_1\cup\overline{\BB}_2$.

(ii) For any $[f_{\lambda_1,\lambda_2}]\in\BB_0\cup\BB_1$, one can choose a sequence in $\HH$ which converges to $[f_{\lambda_1,\lambda_2}]$. Thus $\BB_0\cup\BB_1\subset\partial\HH$.
\end{proof}

\section{Dynamics of maps in $\BB_2$}
In this section, we will apply twist deformation to study the dynamics of maps in $\BB_2$.

Let $[f_{\la_1,\la_2}]\in\HH$ with $\la_1,\la_2\ne 0$. Denote by $U$ the set of points in the Fatou set $F(f_{\la_1,\la_2})$ with infinite forward orbits. Then the quotient space $U/\sim$ is a disjoint union of two tori $T_1$ and $T_2$ under the equivalence relation $z_1\sim z_2$ if $f^n(z_1)=f^m(z_2)$ for some integers $n,m\ge 0$. Denote by $\pi: U\to U/\sim$ the natural projection. Then each torus $T_j$ contains a unique point $x_j$ such that $\pi(c_j)=x_j$, where $c_1$ and $c_2$ are the critical points of $f_{\la_1,\la_2}$.

By the Koenigs Linearization Theorem (\cite{Mil2}), for $j=1,2$, there is a conformal map $\iota_j:\, \C/\Lambda_j\to T_j$, where $\Lambda_j$ is the lattice generated by $2\pi i$ and $\omega_j$ with $e^{\omega_j}=\la_j$, such that for any simple closed curve $\beta\subset\C/\Lambda_j$ corresponding to $\zeta\mapsto\zeta+2\pi i$, each component of $\pi^{-1}(\iota_j(\beta))$ is a Jordan curve in $U$ if $\iota_j(\beta)\subset T_j$ is disjoint from the point $x_j$.

For $j=1,2$, let $\g_j\subset T_j\smm\{x_j\}$ be a simple closed curve such that $\iota_j^{-1}(\g_j)$ is homotopic to the curve corresponding to $\zeta\mapsto\zeta+\omega_j$ in $\C/\Lambda_j$. Then each component of $\pi^{-1}(\g_j)$ is an open arc. In particular, $\pi^{-1}(\g_j)$ has a unique component whose endpoints contain the attracting fixed points.

Let $\tau_j$ be the Dehn twist along $\g_j$. We will consider the quasiconformal deformation of $f_{\la_1,\la_2}$ whose projection to $T_1\cup T_2$ realizes the repeated Dehn twist $\tau_1^{-n}\circ\tau_2^{n}$. Such quasiconformal deformation can be defined as follows.

Take an annulus $A_j\subset T_j\smm\{x_j\}$ for $j=1,2$, such that $\partial A_j$ consists of two disjoint simple closed curves homotopic to $\g_j$ in $T_j\smm\{x_j\}$. Let $\chi_j$ be a conformal map from $A_j$ onto $\{z:\, 1<|z|< r_j\}$. Define a quasiconformal map $\Phi_j$ from $\{z:\, 1<|z|<r_j\}$ onto itself by
$$
\Phi_j(re^{i\theta})=r\exp\left[i\left(\th+(-1)^{j}2\pi\frac{\log r}{\log r_j}\right)\right].
$$
Now we define
$$
\Phi=
\begin{cases}
\chi_j^{-1}\Phi_j\chi_j\, & \text{ on } A_j, \\
\text{id}\, & \text{ otherwise}.
\end{cases}
$$

For every $n\ge 1$, let $\mu_n$ be the Beltrami differential of $\Phi^{\circ n}$. Let $\wt\mu_n$ be the pullback of $\mu_n$ under $\pi$, i.e.,
$$
\wt\mu_n(z)=
\begin{cases}
\mu_n(\pi(z))\frac{\overline{\pi'(z)}}{\pi'(z)}\, & \text{ for } z\in\pi^{-1}(A_1\cup A_2),\\
\text{0}\, & \text{ otherwise}.
\end{cases}
$$
Then there exists a quasiconformal map $\phi_n:\cbar\to\cbar$ with Beltrami differential $\wt\mu_n$. Set $f_n=\phi_n\circ f_{\la_1,\la_2}\circ\phi_n^{-1}$. Then $f_n$ is a rational map. We call $\{[f_n]\}$ a {\bf twist sequence} of $f_{\la_1,\la_2}$ along $(\omega_1,\omega_2)$.

\begin{theorem}\label{convergence}
The twist sequence $\{[f_n]\}$ converges to $[f_{1,\la}]$ as $n\to\infty$, where $\la$ satisfies
\begin{eqnarray}
\frac{1}{1-\la}=\frac{1}{\omega_1}+\frac{1}{\omega_2},
\end{eqnarray}
and $[f_{1,\la}]\in\BB_2$. Conversely, any $[f]\in\BB_2$ is the limit of a twist sequence in $\HH$.
\end{theorem}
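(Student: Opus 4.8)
The plan is to pin down the limit by computing the three fixed-point eigenvalues of $f_n$ and then invoking Lemma \ref{eigenvalue}. Write $\omega_j(f_n)$ for the branch of $\log\lambda_j(f_n)$ close to $0$, $j=1,2$. The crux is the identity
$$
\frac{1}{\omega_j(f_n)}=\frac{1}{\omega_j}+\frac{(-1)^{j}n}{2\pi i},\qquad j=1,2,
$$
which records the effect of the repeated Dehn twist on the multipliers at the two attracting fixed points; any overall sign coming from orientation conventions will be immaterial below.

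To derive it I would argue on the tori. Since $\phi_n$ conjugates $f_{\la_1,\la_2}$ to $f_n$ and $\pi\circ f_{\la_1,\la_2}=\pi$ on $U$, the map $\phi_n$ descends to a homeomorphism $\overline\phi_n\colon T_j\to T_j^{(n)}$, where $T_j^{(n)}\cong\C/\langle 2\pi i,\omega_j(f_n)\rangle$ is the torus attached to $f_n$. By construction $\widetilde\mu_n$ descends under $\pi$ to the Beltrami differential $\mu_n$ of $\Phi^{\circ n}|_{T_j}$, so $\overline\phi_n\circ(\Phi^{\circ n})^{-1}\colon T_j\to T_j^{(n)}$ is conformal, hence affine, $z\mapsto c_jz+d_j$. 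Now $\Phi^{\circ n}|_{T_j}$ is isotopic to $\tau_j^{(-1)^{j}n}$, which fixes the homology class $[\omega_j]$ of $\g_j$ and sends the class $[2\pi i]$ — that of a loop encircling the attracting fixed point, by the Jordan-curve property recorded in the construction — to $[2\pi i]\pm(-1)^{j}n[\omega_j]$; on the other hand $\overline\phi_n$ carries $[2\pi i]$ and $[\omega_j]$ of $T_j$ to the corresponding classes $[2\pi i]$ and $[\omega_j(f_n)]$ of $T_j^{(n)}$, each being characterized dynamically (monodromy around the attracting fixed point, resp.\ ``apply the map once''), a feature $\phi_n$ respects. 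Passing to the induced maps on first homology and identifying $H_1(\C/\Lambda)$ with $\Lambda$ (on which $z\mapsto c_jz+d_j$ acts by multiplication by $c_j$), one obtains $c_j\omega_j=\omega_j(f_n)$ and $c_j\cdot2\pi i\pm(-1)^{j}n\,\omega_j(f_n)=2\pi i$, and eliminating $c_j$ gives the identity. In passing this shows $T_j^{(n)}\cong T_j$ and $\mathrm{Re}\,\omega_j(f_n)=\mathrm{Re}\,\omega_j<0$, so $\lambda_j(f_n)\in\D\smm\{0\}$ and $[f_n]\in\HH$; thus $\{[f_n]\}$ really is a twist sequence in $\HH$.

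Granting the identity, let $n\to\infty$. Then $\omega_j(f_n)\to0$, so $\lambda_j(f_n)\to1$, and the expansion $1/(1-e^{w})=-1/w+1/2+O(w)$ gives $1/(1-\lambda_j(f_n))=-1/\omega_j\mp(-1)^{j}n/(2\pi i)+1/2+o(1)$. Substituting into the fixed-point relation (1), legitimate since $f_n$ has three distinct fixed points, the $n$-terms cancel because $(-1)^1=-(-1)^2$ — this is exactly the role of the opposite exponents in $\tau_1^{-n}\circ\tau_2^{n}$ — leaving $1/(1-\lambda_3(f_n))\to 1/\omega_1+1/\omega_2$; in particular $\lambda_3(f_n)\to\lambda$ with $1/(1-\lambda)=1/\omega_1+1/\omega_2$. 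Hence the eigenvalue triples of $f_n$ converge to $(1,1,\lambda)$, so $[f_n]\to[f_{1,\lambda}]$ by Lemma \ref{eigenvalue}; and $\mathrm{Re}\,\omega_j<0$ forces $\mathrm{Re}(1/\omega_j)<0$, whence $\mathrm{Re}(1/(1-\lambda))<0$, i.e.\ $\mathrm{Re}\,\lambda>1$, so $[f_{1,\lambda}]\in\BB_2$. For the converse, given $[f_{1,\mu}]\in\BB_2$ we have $\mathrm{Re}(1/(1-\mu))<0$; choose $\omega_1,\omega_2$ with $\mathrm{Re}\,\omega_j<0$ and $1/\omega_1+1/\omega_2=1/(1-\mu)$, for instance $\omega_1=\omega_2=2(1-\mu)$, perturbing within this one-dimensional solution locus if needed so that the critical points of $f_{\la_1,\la_2}$, $\la_j:=e^{\omega_j}\in\D\smm\{0\}$, have infinite forward orbits (the exceptional loci are proper analytic subsets). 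Then $[f_{\la_1,\la_2}]\in\HH$ and, by the first part, its twist sequence along $(\omega_1,\omega_2)$ converges to $[f_{1,\mu}]$.

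The main obstacle is the multiplier-change identity: the real work is to match the dynamical homology markings of $T_j$ and $T_j^{(n)}$ under $\overline\phi_n$ and to keep the orientation of the twist straight, so that the $n$-dependent contributions in (1) cancel rather than reinforce; once this is in place the asymptotics and the converse are routine.
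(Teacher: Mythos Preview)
Your proof is correct and follows essentially the same route as the paper: both derive the key identity $1/\omega_j(f_n)=1/\omega_j+(-1)^{j}n/(2\pi i)$ from the change of lattice induced by the Dehn twist on the quotient torus, then feed the resulting asymptotics into the fixed-point relation (1) and conclude convergence via Lemma~\ref{eigenvalue}, with the converse handled by surjectivity of $(\omega_1,\omega_2)\mapsto\lambda$. One small slip: the identity only gives $\mathrm{Re}\bigl(1/\omega_j(f_n)\bigr)=\mathrm{Re}\bigl(1/\omega_j\bigr)$, not $\mathrm{Re}\,\omega_j(f_n)=\mathrm{Re}\,\omega_j$, though this still yields $\mathrm{Re}\,\omega_j(f_n)<0$ --- and in any case $[f_n]\in\HH$ is automatic since $\phi_n$ is a quasiconformal conjugacy.
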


\begin{proof}
Denote by $\lambda_j(f_n)$ ($j=1,2$) the eigenvalues of $f_n$ at the two attracting fixed points. The quotient space of $f_n$ is the disjoint union of two tori $T_{1,n}$ and $T_{2,n}$. By the Koenigs Linearization Theorem, for $j=1,2$, $T_{j,n}$ is holomorphically isomorphic to $\C/\Lambda_{j,n}$, where $\Lambda_{j,n}$ is the lattice generated by $2\pi i$ and $\omega_{j,n}$ with $e^{\omega_{j,n}}=\la_j(f_n)$. Since $f_n$ is the quasiconformal deformation of $f_{\la_1,\la_2}$ whose projection to $T_1\cup T_2$ realizes the repeated Dehn twist $\tau_1^{-n}\circ\tau_2^{n}$, we have $T_{j,n}$ is also holomorphically isomorphic to $\C/\tilde\Lambda_{j,n}$, where $\tilde\Lambda_{j,n}$ is the lattice generated by $2\pi i+(-1)^jn\omega_{j}$ and $\omega_{j}$, for $j=1,2$. Thus
$$
\frac{2\pi i+(-1)^jn\omega_{j}}{2\pi i}=\frac{\omega_{j}}{\omega_{j,n}}.
$$
Equivalently, we have
$$
\frac{1}{\omega_{j,n}}=\frac{1}{\omega_j}+\frac{(-1)^jn}{2\pi i}.
$$
Thus
$$
\frac{1}{\omega_{1,n}}+\frac{1}{\omega_{2,n}}=\frac{1}{\omega_1}+\frac{1}{\omega_2},
$$
and $\omega_{j,n}\to 0$ as $n\to\infty$. Note that
$$
\lim_{\omega\to 0}\left(\frac{1}{\omega}+\frac{1}{1-e^{\omega}}\right)=\frac{1}{2}.
$$
Thus
\begin{eqnarray*}
\frac{1}{\omega_1}+\frac{1}{\omega_2}&=&\lim_{n\to\infty}\left(\frac{1}{\omega_{1,n}}+\frac{1}{\omega_{2,n}}\right) \\
&=&1-\lim_{n\to\infty}\left(\frac{1}{1-e^{\omega_{1,n}}}+\frac{1}{1-e^{\omega_{2,n}}}\right) \\
&=&\lim_{n\to\infty}\frac{1}{1-\la_{3}(f_n)},
\end{eqnarray*}
where $\la_3(f_n)$ denotes the eigenvalue of $f_n$ at the repelling fixed point. Let $\la=\ds\lim_{n\to\infty}\la_3(f_n)$. Then the equation (3) holds.

Since $(\la_1(f_n),\la_2(f_n),\la_3(f_n))\to (1,1,\la)$, $\{[f_n]\}$ converges to $[f_{1,\la}]$. Note that $\la_1,\la_2\in\D$. Thus $\mathrm{Re}\,\omega_j<0$ for $j=1,2$. By the equation (3), we have $\mathrm{Re}\, 1/(1-\la)<0$. So $\mathrm{Re}\,\la>1$ and hence $[f_{1,\la}]\in\BB_2$.

Conversely, notice that the map
$$
\lambda(\omega_1,\omega_2)=1-\frac{\omega_1\omega_2}{\omega_1+\omega_2}
$$
from $\{\omega_1:\,\mathrm{Re}\,\omega_1<0\}\times\{\omega_2:\,\mathrm{Re}\,\omega_2<0\}$ to $\{\lambda:\,\mathrm{Re}\,\lambda>1\}$ is surjective. Thus for any $\la\in\C$ with $\mathrm{Re}\,\la>1$, there exist $\omega_1,\omega_2\in\C$ with $\mathrm{Re}\,\omega_1, \mathrm{Re}\,\omega_2<0$, such that
$$
\frac{1}{1-\la}=\frac{1}{\omega_1}+\frac{1}{\omega_2}.
$$
Choose $\la_1=e^{\omega_1}$ and $\la_2=e^{\omega_2}$. Let $\{[f_n]\}$ be the twist sequence of $f_{\la_1,\la_2}$ along $(\omega_1,\omega_2)$. Then $\{[f_n]\}$ converges to $[f_{1,\la}]$ by previous argument.
\end{proof}

\begin{lemma}\label{dynamics}
For any $[f_{1,\la}]\in\BB_2$, $J(f_{1,\la})$ is a Cantor set.
\end{lemma}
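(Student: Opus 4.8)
The plan is to show that $J(f_{1,\la})$ is totally disconnected by verifying the criterion recalled in deduction (c): by \cite[Lemma 8.2]{Mil1}, the Julia set of a quadratic rational map is either connected or a Cantor set, and it is a Cantor set precisely when both critical orbits converge to an attracting or parabolic fixed point. So the entire task reduces to locating the critical orbits of $f_{1,\la}$ when $\mathrm{Re}\,\la>1$. First I would identify the fixed points and their types: for $[f_{1,\la}]\in\BB_2$ the map has a parabolic fixed point with eigenvalue $1$ (at $0$, say, in the normalization $f_{1,\la}(z)=(z+z^2)/(\la z+1)$) and a repelling fixed point with eigenvalue $\la$ (since $\mathrm{Re}\,\la>1$ forces $|\la|>1$). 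There are no other fixed points to host critical orbits, so the only possible accumulation set for a bounded critical orbit is the parabolic point $0$ (together with its attracting petal) or periodic cycles off the fixed points.

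The natural route is to realize $f_{1,\la}$ as a limit of the twist sequence $\{[f_n]\}$ of some $f_{\la_1,\la_2}\in\HH$, which is available by Theorem \ref{convergence}, and to transport dynamical information through this convergence. For each $f_n\in\HH$ the Julia set is a quasicircle and the two critical points lie in the two distinct (super)attracting basins; the surgery $\phi_n$ is conformal outside the preimages of the twisting annuli $A_1,A_2$, so in particular it does not move the critical points into the Julia set. I would track the two critical values of $f_n$: they sit in the Fatou components containing the fixed points with eigenvalues $\la_j(f_n)\to 1$. As $n\to\infty$ these two attracting basins "merge" onto the single parabolic basin of $0$ for $f_{1,\la}$, so in the limit both critical points are attracted to the parabolic fixed point $0$. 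Concretely, one shows that the critical points of $f_n$ converge to critical points of $f_{1,\la}$ (by Lemma \ref{eigenvalue}-type convergence of the maps in a neighborhood of the sphere, the critical points vary continuously), and that each limiting critical point lies in the immediate parabolic basin, e.g. by exhibiting, for each $n$, a definite attracting petal / linearizing neighborhood at the fixed point of $f_n$ whose size is uniformly bounded below and which contains the critical value, then passing to the limit. Since $f_{1,\la}$ has no other non-repelling cycle available to absorb a critical orbit, both critical orbits of $f_{1,\la}$ converge to $0$. By \cite[Lemma 8.2]{Mil1}, $J(f_{1,\la})$ is therefore a Cantor set.

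An alternative, more self-contained argument avoids the limiting procedure: work directly with $f_{1,\la}$. Normalize so the parabolic point is at $0$ with $f_{1,\la}(z)=z+a z^2+\cdots$, analyze the Leau--Fatou flower, and show by an explicit estimate on the two critical points $c_1,c_2$ (which are the roots of an explicit quadratic in $z$ depending on $\la$) that their forward orbits enter an attracting petal. One would also need to rule out that a critical orbit escapes to or is captured by the repelling fixed point's dynamics — but the only non-repelling periodic orbit is the parabolic point, so any non-escaping critical orbit must accumulate there, and an escaping critical orbit is impossible on the sphere for a rational map. The main obstacle in either approach is the uniform control: in the twist-sequence approach, proving that the critical values of $f_n$ stay in a parabolic-basin neighborhood of $0$ that survives in the limit (rather than, say, drifting toward the Julia set as the two basins pinch together); in the direct approach, the somewhat unpleasant bookkeeping of showing both explicit critical points land in a petal for every $\la$ with $\mathrm{Re}\,\la>1$. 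I expect the cleanest writeup to use the twist sequence, since the conformality of $\phi_n$ away from the annuli $A_1,A_2$ gives exactly the needed stability of the critical-point locations relative to the Fatou structure.
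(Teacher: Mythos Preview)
Your overall strategy---realize $f_{1,\la}$ as the limit of a twist sequence and use the conformality of $\phi_n$ off the annuli to push the two critical orbits into the parabolic basin, then invoke Milnor's Cantor-set criterion---is exactly the paper's. But the concrete mechanism you propose for the key step does not work, and the paper's replacement is worth knowing.

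You suggest controlling, for each $f_n$, an attracting petal or linearizing neighborhood of uniform size containing the critical value, and then passing to the limit. The difficulty is that the eigenvalues $\la_j(f_n)\to 1$, so the Koenigs linearizing domains and the attracting petals of $f_n$ degenerate; there is no a priori uniform lower bound on their size, and the two attracting fixed points collide at the parabolic point. So ``uniform petal'' is precisely what you do \emph{not} have.

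The paper bypasses this entirely. Set $\Omega_j=D_j\smm\pi^{-1}(\overline{A}_j)$; each $\Omega_j$ is a single domain containing an entire critical orbit, and $\phi_n$ is conformal on $\Omega_1\cup\Omega_2$. Now normalize $\phi_n$ by fixing the repelling fixed point of $f_{\la_1,\la_2}$ and two of its preimages, so that $f_n\to f_{1,\la}$ uniformly. These three normalization points lie in $J(f_{\la_1,\la_2})$, hence outside $\Omega_1\cup\Omega_2$; therefore $\{\phi_n|_{\Omega_j}\}$ is a normal family of univalent maps, and any subsequential limit $\varphi$ is either univalent or constant. Constant is ruled out because it would force a critical point of $f_{1,\la}$ to be a fixed point. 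Hence $\varphi$ is conformal, $f_{1,\la}(\varphi(\Omega_j))=\varphi(\Omega_j)$, and $\varphi(\Omega_j)$ is an $f_{1,\la}$-invariant open set containing a critical point---so it lies in an invariant Fatou domain. Since $f_{1,\la}$ has no attracting fixed point, that domain is the parabolic basin, and both critical points land there. The point is that normal-family compactness replaces any need for quantitative petal estimates.
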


\begin{proof}
We will exploit the process of the twist deformation as above to study the dynamics of $[f_{1,\la}]$. Denote the two invariant attracting Fatou domains of $f_{\la_1,\la_2}$ by $D_1$ and $D_2$. Denote $\Omega_j=D_j\smm\pi^{-1}(\overline{A}_j)$. Then both $\Omega_1$ and $\Omega_2$ are domains containing a critical orbit by the choice of $A_j$. The quasiconformal map $\phi_n$ is conformal in $\Omega_1\cup\Omega_2$ for $n\ge 1$. We normalize $\phi_n$ by fixing the repelling fixed point of $f_{\la_1,\la_2}$ and two points in the backward orbit of it such that $f_n=\phi_n\circ f_{\la_1,\la_2}\circ \phi_n^{-1}$ converges uniformly to $f_{1,\la}$. Since such three points are not contained in $\Omega_1\cup\Omega_2$, the sequence $\{\phi_n\}$ is a normal family in $\Omega_1\cup\Omega_2$. Thus there exists a subsequence $\{\phi_{n_k}\}$ locally uniformly convergent to a map $\varphi$ defined on $\Omega_1\cup\Omega_2$ which is either conformal or a constant. If $\varphi$ is a constant in some $\Omega_j$, then one of the critical point of $f_{1,\la}$ is a fixed point. This is impossible. Thus $\varphi$ is conformal in $\Omega_1\cup\Omega_2$ and $f_{1,\la}(\varphi(\Omega_j))=\varphi(\Omega_j)$. Therefore $\varphi(\Omega_j)$ is contained in an invariant Fatou domain of $f_{1,\la}$. So each of the two critical points of $f_{1,\la}$ lie in an invariant Fatou domain of $f_{1,\la}$. Note that $f_{1,\la}$ has no attracting fixed points. So both of the critical points lie in the fixed parabolic Fatou domain. Thus $J(f_{1,\la})$ is a Cantor set \cite[Lemma 8.1]{Mil1}.
\end{proof}

\begin{figure}[http]
\centering
\includegraphics[width=12cm]{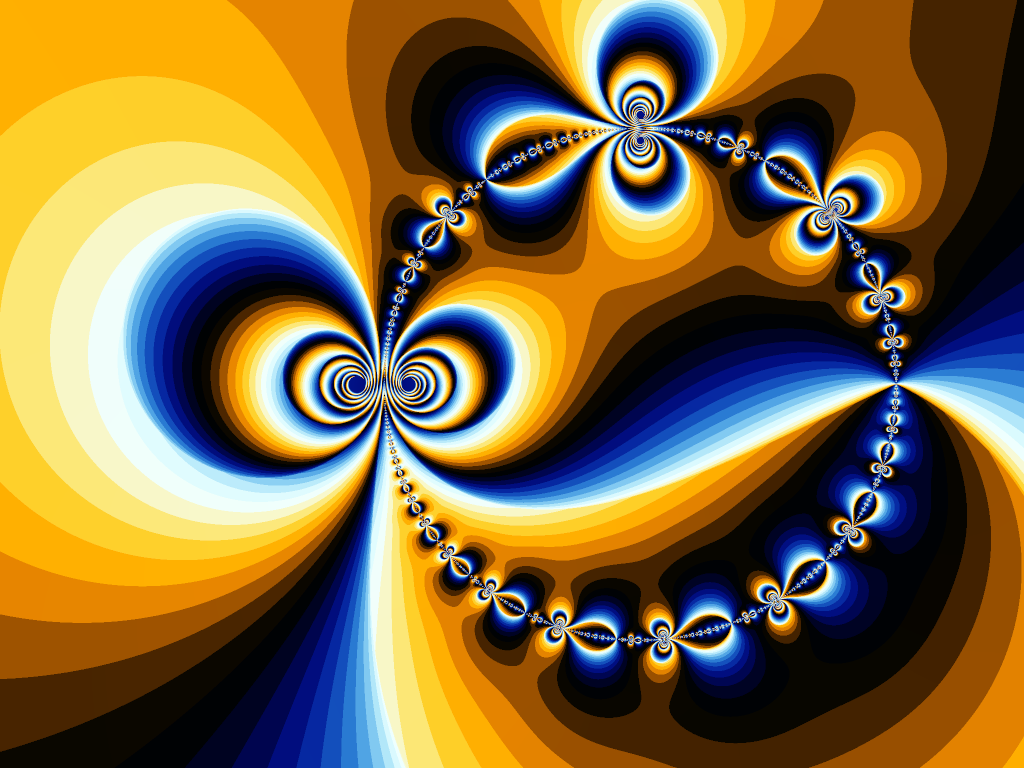}
\caption{The dynamics near $J(h_n)$}
\end{figure}

In order to illustrate the deformation of the dynamics of the twist sequence $\{f_n\}$ and its limit map near the Julia sets, we choose a map $[f_{\la_1,\la_2}]\in\HH$ such that $\la_1=\bar\la_2$. Then
$$
f_{\la_1,\la_2}(1/\bar z)\cdot\overline{f_{\la_1,\la_2}(z)}\equiv 1,
$$
i.e., $f_{\la_1,\la_2}$ is symmetric about the unit circle and $J(f_{\la_1,\la_2})$ is the unit circle.

Since $\la_1=\bar\la_2$, we may choose $\omega_1,\omega_2$ such that $e^{\omega_1}=\la_1$, $e^{\omega_2}=\la_2$ and $\omega_1=\bar\omega_2$. Let $\la_{1,n},\la_{2,n}$ be the eigenvalues of the attracting fixed points of the twist sequence $\{[f_n]\}$ of $f_{\la_1,\la_2}$ along $(\omega_1,\omega_2)$. Then $\la_{1,n}=\overline{\la_{2,n}}$ by the definition of twist deformation. Thus $f_{\la_{1,n},\la_{2,n}}$ is also symmetric about the unit circle.
By making a holomorphic conjugacy, we may choose a representative $h_n\in [f_n]$ such that $h_n$ is still symmetric about the unit circle, but the two attracting fixed points of $h_n$ are $-|\la_{1,n}|$ and $-1/|\la_{1,n}|$. Then as $n\to\infty$, $\{h_n\}$ uniformly converges to a quadratic rational map $h_{\infty}$. See Figure 2 for the dynamics of $h_n$ and Figure 3 for the dynamics of $h_{\infty}$ near Julia sets.

\begin{figure}[http]
\centering
\includegraphics[width=12cm]{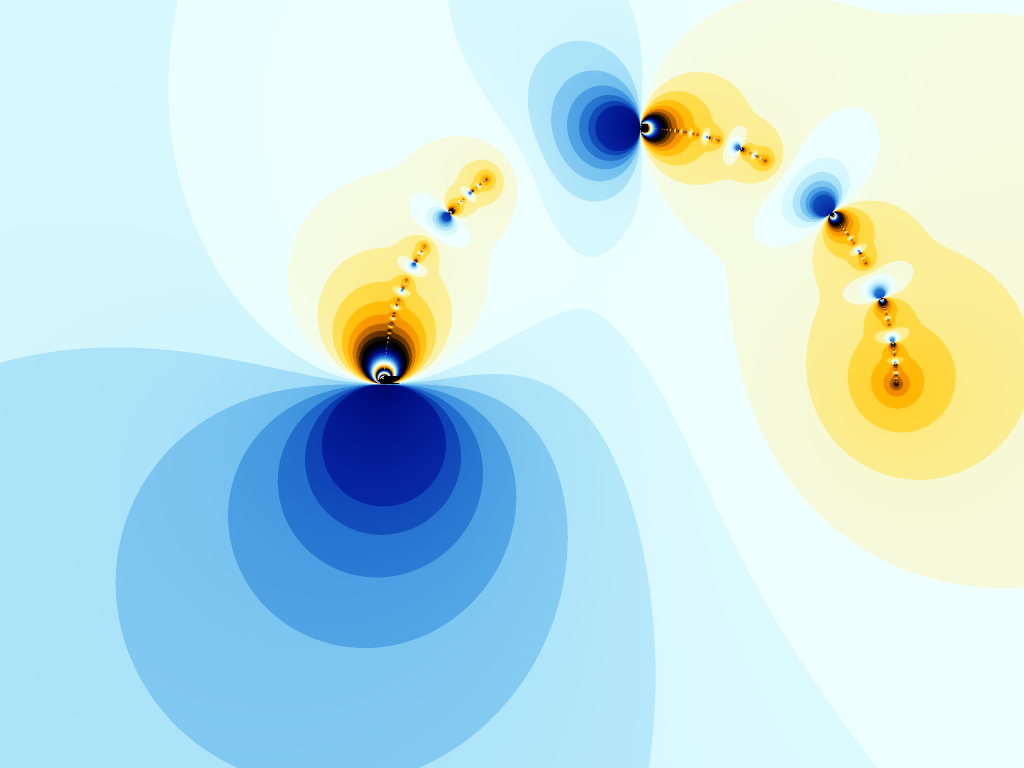}
\caption{The dynamics near $J(h_\infty)$}
\end{figure}

\section{The boundedness of $\MM_1$}
We may parameterize $\mathrm{Per}_1(1)$ by its eigenvalue at another fixed point.

\begin{lemma}\label{bound}
$\mathbf{M_1}\subset\{\la\in\C:\, |\la-1|\le 9\}$.
\end{lemma}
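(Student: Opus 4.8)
The plan is to show directly that $|\lambda-1|>9$ forces $J(f_{1,\lambda})$ to be a Cantor set; since, under this parametrisation, $\mathbf{M}_1$ is the set of $\lambda$ for which $J(f_{1,\lambda})$ is connected, this yields the inclusion. It is convenient to conjugate $f_{1,\lambda}$ by $z\mapsto 1/z$, which produces $g(w)=\frac{w^2+\lambda w}{w+1}$: here $0$ is the fixed point of multiplier $\lambda$, while $\infty$ is the parabolic fixed point (of multiplier $1$ and with a single attracting petal, since $f_{1,\lambda}(z)-z$ has a double zero at $0$). Polynomial division gives
$$g(w)=w+(\lambda-1)-\frac{\lambda-1}{w+1},$$
which exhibits $g$ near $\infty$ as a small perturbation of the translation $w\mapsto w+(\lambda-1)$. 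The critical points of $g$ are $w_\pm=-1\pm\sqrt{1-\lambda}$, and substituting $\lambda=-w^2-2w$ at a critical point gives $g(w_\pm)=-w_\pm^2=(\lambda-2)\pm 2\sqrt{1-\lambda}$.

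Write $L=\lambda-1$ and $\psi(w)=\mathrm{Re}(w/L)$; from the normal form, $\psi(g(w))=\psi(w)+1-\mathrm{Re}(1/(w+1))$. Fix a small $\epsilon>0$, put $\rho=2+\epsilon$, $\delta=\rho/|L|$, and let $W=\{w:\psi(w)>\delta\}$. Every $w\in W$ satisfies $|w|\ge|L|\,\psi(w)>|L|\delta=\rho$, so $W\subset\{|w|>\rho\}$ and $-1\notin W$; hence for $w\in W$ we have $|\mathrm{Re}(1/(w+1))|\le 1/(|w|-1)<1/(\rho-1)$, and therefore $\psi(g(w))>\psi(w)+1-1/(\rho-1)=\psi(w)+\epsilon/(1+\epsilon)$. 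Thus $g(W)\subset W$ and, iterating, $\psi(g^n(w))\to\infty$, so $|g^n(w)|\to\infty$. Hence $W$ is a connected subset of the Fatou set whose points are attracted to $\infty$; as $W$ contains the attracting petal $\{\psi>T\}$ for $T$ large, it lies in the forward invariant immediate parabolic basin $V_0$.

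It remains to put both critical values into $W$. Set $\sigma=1/\sqrt{1-\lambda}$, so that $t:=|\sigma|=|\lambda-1|^{-1/2}$ and $t^2=1/|L|$; a direct computation rewrites $g(w_\pm)=-w_\pm^2$ as $g(w_\pm)/L=(1\mp\sigma)^2$, whence $\psi(g(w_\pm))=\mathrm{Re}((1\mp\sigma)^2)\ge 1-2t-t^2$. So $g(w_\pm)\in W$ as soon as $1-2t-t^2>\rho t^2$, i.e. $(1+\rho)t^2+2t-1<0$. If $|\lambda-1|>9$, i.e. $t<1/3$, then $3t^2+2t-1=(3t-1)(t+1)<0$, so by continuity $(1+\rho)t^2+2t-1<0$ for $\rho$ close enough to $2$; fixing such $\rho=2+\epsilon$ places $g(w_+)$ and $g(w_-)$ in $W$. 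Since $W$ is simply connected and contains both critical values of the degree-two map $g$, the preimage $g^{-1}(W)$ is connected; it contains $W$ (so meets $V_0$), it contains both $w_\pm$, and every point of it is attracted to $\infty$, so $g^{-1}(W)\subset V_0$. Thus both critical points of $g$ lie in the fixed parabolic Fatou domain, and by \cite[Lemma 8.1]{Mil1} $J(g)$, equivalently $J(f_{1,\lambda})$, is a Cantor set; in particular $\lambda\notin\mathbf{M}_1$. As this holds for all $\lambda$ with $|\lambda-1|>9$, we obtain $\mathbf{M}_1\subset\{\lambda\in\C:|\lambda-1|\le 9\}$.

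I expect the only delicate step to be the last one: it is not enough that the critical orbits converge to the parabolic fixed point, one must place the critical \emph{points} in the invariant component $V_0$, which is why one needs both critical values inside the forward invariant half-plane $W$ so that $g^{-1}(W)$ is connected and trapped in $V_0$. It is precisely this requirement — rather than the mere escape rate of $g$ near $\infty$ — that pins down the constant $9$, through the factorisation $3t^2+2t-1=(3t-1)(t+1)$. The remaining computations (the polynomial division, the identity $g(w_\pm)=-w_\pm^2$, and $g(w_\pm)/L=(1\mp\sigma)^2$) are routine.
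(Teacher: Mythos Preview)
Your proof is correct and follows essentially the same strategy as the paper's: put the parabolic fixed point at $\infty$, exhibit a forward-invariant half-plane containing both critical values when $|\lambda-1|>9$, and conclude that $J$ is disconnected. The paper works instead with the conjugate normal form $g_B(z)=z+B+1/z$ (so $\lambda=1-B^2$ and the hypothesis becomes $|B|>3$), uses the region $\{|z|>1,\ \mathrm{Re}(z/B)>0\}$, and passes directly from ``both critical orbits converge to $\infty$'' to ``$J(g_B)$ is disconnected'' without your extra $g^{-1}(W)$--connectedness step; your argument is thus slightly more explicit at the end, but not materially different.
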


\begin{proof}
Each map in $\mathrm{Per}_1(1)$ can be represented by $g_B(z)=z+B+1/z$ with $B\in\C$. The infinity is a parabolic fixed point of $g_B$ with eigenvalue $1$. When $B=0$, $g_B(z)$ has no other else fixed point. Otherwise, $-1/B$ is the another fixed point of $g_B$ with eigenvalue $1-B^2$. Thus $g_B$ is holomorphically conjugate to $f_{1,\la}$ for $\la=1-B^2$.

Assume $|B|>3$. From
$$
\frac{g_B(z)}{B}-\frac{z}{B}=1+\frac{1}{Bz},
$$
we see that if $|z|>1$ and $\mathrm{Re}\,(z/B)>0$, then
$$
\mathrm{Re}\,\left(\frac{g_B(z)}{B}-\frac{z}{B}\right)=1+\mathrm{Re}\,\frac{1}{Bz}>\frac{2}{3}.
$$
Thus
$$
\left|\frac{g_B(z)}{B}\right|>\mathrm{Re}\,\frac{g_B(z)}{B}>\frac{2}{3}+\mathrm{Re}\,\frac{z}{B}>\frac{2}{3},
$$
and hence $|g_B(z)|>2|B|/3>1$.

Inductively, we obtain that if $|z|>1$ and $\mathrm{Re}\,(z/B)>0$, then $|g^n_B(z)|>1$ and $\mathrm{Re}\,(g_B^n(z)/B)>0$ for all $n\ge 1$. Consequently $\{g_B^n(z)\}\to\infty$ as $n\to\infty$.

The critical points of $g_B$ are located at $\pm 1$ and the corresponding critical values are $v_{\pm}=B\pm 2$. Thus when $|B|>3$, we have $|v_{\pm}|>1$ and
$$
\mathrm{Re}\,\frac{v_{\pm}}{B}=1\pm\mathrm{Re}\frac{2}{B}>0.
$$
Therefore the two critical points of $g_B$ converges to the infinity. So $J(g_B)$ is disconnected. Thus if $|\la-1|=|B^2|>9$, then $J(f_{1,\lambda})$ is disconnected. Now the lemma is proved.
\end{proof}

\begin{proof}[Proof of Theorem \ref{main}]
Combining Lemma \ref{12}, Theorem \ref{convergence} and Lemma \ref{dynamics}, we derive the statements (1) and (2) in Theorem \ref{main}.

From (1) and (2), we have $\BB_2\subset\partial\HH\cap(\mathrm{Per}_1(1)\smm\MM_1)\subset\overline{\BB}_2$. Lemma \ref{bound} shows that $\partial\HH\cap(\mathrm{Per}_1(1)\smm\MM_1)\neq\mathrm{Per}_1(1)\smm\MM_1$.
\end{proof}

\vspace{3mm}
\noindent{\it Acknowledgements}. The authors would like to express the sincere gratitude to the anonymous referee for all the valuable and helpful suggestions and comments.

\noindent
Guizhen Cui \\
School of Mathematical Sciences, \\
Shenzhen University, Shenzhen, 518052, P. R. China \\
and \\
Academy of Mathematics and Systems Science,\\
Chinese Academy of Sciences, Beijing, 100190, P. R. China.\\
gzcui@math.ac.cn

\vskip 0.24cm

\noindent
Wenjuan Peng \\
Academy of Mathematics and Systems Science, \\
Chinese Academy of Sciences, Beijing 100190, P. R. China.\\
wenjpeng@amss.ac.cn
\end{document}